\nonstopmode \numberwithin{equation}{section}
\newtheorem{theorem}{Theorem}[section]
\newtheorem{corollary}{Corollary}[section]
\theoremstyle{remark}
\theoremstyle{definition}
\newtheorem{remark}{Remark}[section]
\theoremstyle{plain}
\numberwithin{equation}{section}
\numberwithin{theorem}{section}
\newenvironment{customthm}[1]
  {\innercustomthm}
  {\endinnercustomthm}
\newcounter{minutes}\setcounter{minutes}{\time}
\newcounter{hours}\setcounter{hours}{\time}
\begin{document}

\title{An application of Schur algorithm to variability regions of certain analytic functions-II}

\author{Md Firoz Ali}
\address{Md Firoz Ali,
Department of Mathematics
NIT Durgapur,
Mahatma Gandhi Avenue,
Durgapur - 713209,
West Bengal, India.}
\email{firoz.ali@maths.nitdgp.ac.in}

\author{ Vasudevarao Allu}
\address{ Vasudevarao Allu,
Discipline of Mathematics,
School of Basic Sciences,
Indian Institute of Technology  Bhubaneswar,
Argul, Bhubaneswar, PIN-752050, Odisha (State),  India.}
\email{avrao@iitbbs.ac.in}

\author{Hiroshi Yanagihara}
\address{Hiroshi Yanagihara,
    Department of Applied Science,
	Faculty of Engineering,
	Yamaguchi University,
	Tokiwadai, Ube 755,
	Japan}
\email{hiroshi@yamaguchi-u.ac.jp}

\subjclass[2010]{Primary 30C45, 30C75}
\keywords{Analytic functions, univalent functions, convex functions, starlike functions, Banach space, norm, Schur algorithm, subordination, variability region.}

\def\thefootnote{}
\footnotetext{ {\tiny File:~\jobname.tex,
printed: \number\year-\number\month-\number\day,
          \thehours.\ifnum\theminutes<10{0}\fi\theminutes }
} \makeatletter\def\thefootnote{\@arabic\c@footnote}\makeatother

\begin{abstract}
We continue our study on variability regions in \cite{Ali-Vasudevarao-Yanagihara-2018}, where the authors determined  the region of variability $V_\Omega^j (z_0, c ) = \{ \int_0^{z_0} z^{j}(g(z)-g(0))\, d z : g({\mathbb D}) \subset \Omega, \; (P^{-1} \circ g) (z) = c_0 +c_1z + \cdots + c_n z^n + \cdots \}$ for each fixed $z_0 \in {\mathbb D}$, $j=-1,0,1,2, \ldots$ and $c  = (c_0, c_1 , \ldots , c_n) \in \mathbb{C}^{n+1}$, when $\Omega\subsetneq\mathbb{C}$ is a convex domain, and $P$ is a conformal map of the unit disk ${\mathbb D}$ onto $\Omega$. In the present article, we first show that in the case $n=0$, $j=-1$ and $c=0$, the result obtained in \cite{Ali-Vasudevarao-Yanagihara-2018} still holds when one assumes only that $\Omega$ is starlike  with respect to $P(0)$. Let $\mathcal{CV}(\Omega)$ be the class of analytic functions $f$ in ${\mathbb D}$ with $f(0)=f'(0)-1=0$ satisfying   $1+zf''(z)/f'(z) \in {\Omega}$. As applications we determine variability regions of $\log f'(z_0)$ when $f$ ranges over $\mathcal{CV}(\Omega)$ with or without the conditions $f''(0)= \lambda$ and $f'''(0)= \mu$. Here $\lambda$ and $\mu$ are arbitrarily preassigned values. By choosing particular $\Omega$, we obtain the precise variability regions of $\log f'(z_0)$ for other  well-known subclasses of analytic and univalent functions.
\end{abstract}

\thanks{}

\maketitle
\pagestyle{myheadings}
\markboth{Md Firoz Ali, Vasudevarao Allu  and  Hiroshi Yanagihara}{Variability regions of certain analytic and univalent functions}


\section{Introduction}
Let ${\mathbb C}$ be the complex plane. For $c \in {\mathbb C}$ and $r > 0$, let ${\mathbb D}(c,r) := \{ z \in {\mathbb C} : |z-c| < r \}$ and $\overline{\mathbb D}(c,r) := \{ z \in {\mathbb C} : |z-c| \leq r \}$. In particular, we denote the unit disk by ${\mathbb D} := {\mathbb D}(0,1)$.
Let ${\mathcal A}({\mathbb D})$ be the class of analytic functions in the unit disk ${\mathbb D}$ endowed with the topology of uniform convergence on every compact subset of ${\mathbb D}$. Denote by $\mathcal{A}_0$, functions $f$ in ${\mathcal A}({\mathbb D})$, normalized by $f(0) = f'(0)-1 = 0$. Further, let $\mathcal{S}$ denote the standard subclass of $\mathcal{A}_0$  of normalized univalent functions in ${\mathbb D}$.
A function $f$ in $\mathcal{A}_0$ is called starlike (resp. convex) if $f$ is univalent and $f(\mathbb{D})$ is starlike with respect to $0$ (resp. convex). Let $\mathcal{S}^*$ and $\mathcal{CV}$ denote the classes of starlike and convex functions respectively. It is well-known that a function $f\in\mathcal{A}_0$ is in $\mathcal{S}^*$ (resp. $\mathcal{CV}$) if, and only if, ${\rm Re\,}\left(zf'(z)/f(z)\right)>0$ (resp. ${\rm Re} \, \{ zf''(z)/f'(z) \} +1 > 0$) for $z\in\mathbb{D}$.

\bigskip
Let ${\mathcal F}$ be a subclass of ${\mathcal A}({\mathbb D})$ and $z_0 \in {\mathbb D}$. Then upper and lower estimates of the form
$$
 M_1 \leq |f'(z_0)| \leq M_2 ,
 \qquad m_1 \leq \mbox{\rm Arg}\, f'(z_0) \leq m_2
\quad
\mbox{for all } \; f \in {\mathcal F}
$$
are respectively called  distortion  and rotation theorems at $z_0$ for ${\mathcal F}$, where $M_j$ and $m_j$ ($j=1,2$) are some non-negative constants. These estimates deal only with absolute value or argument of $f'(z_0)$. If one wants to study the complex value $f'(z_0)$ itself, it is necessary to consider the variability region of $f'(z_0)$ when $f$ ranges over ${\mathcal F}$, i.e., the set $\{ f'(z_0) : f \in {\mathcal F } \}$. For example \cite[Chapter 2, Exercise 10, 11 and 13]{Duren-book}, it is known that
$$
  \{ \log f'(z_0) : f \in \mathcal{CV} \}
  =
  \left\{ \log \frac{1}{(1-z)^2 } : |z| \leq |z_0| \right\} .
$$

\bigskip
For $f \in \mathcal{CV}$, an easy consequence of the Schwarz's lemma is that that $|f''(0)| \leq 2$. For fixed $z_0 \in {\mathbb D}$ and $\lambda \in \overline{\mathbb D}$, Gronwall \cite{Gronwall-1920} obtained the sharp lower and upper estimates for $|f'(z_0)|$, when $f \in \mathcal{CV}$ satisfies the additional condition $f''(0) = 2 \lambda$ (see also \cite{Finkelstein-1967}). Let
$$
 \widetilde{V} (z_0, \lambda)
 = \{ \log f'(z_0) : f \in \mathcal{CV} \; \mbox{and} \;
  f''(0) = 2 \lambda \} .
$$
If $| \lambda | =1 $, then by Schwarz's lemma, for $f \in \mathcal{CV}$  the condition $f''(0) = 2 \lambda $ forces $f(z) \equiv z/(1-\lambda z)$, and hence $\widetilde{V} (z_0, \lambda) = \{ \log 1/(1-\lambda z_0)^2  \}$.
Since $\widetilde{V}(e^{-i \theta}z_0, e^{i \theta }\lambda) = \widetilde{V}(z_0, \lambda )$ for all $\theta \in {\mathbb R}$, without loss of generality we may assume that $0 \leq \lambda < 1$. In 2006, Yanagihra \cite{Yanagihra:convex} obtained the following extension of Gronwall's \cite{Gronwall-1920} result.

\begin{customthm}{A}\label{theorem-A}
For any $z_0 \in {\mathbb D} \backslash \{ 0 \}$ and $0 \leq \lambda < 1$
the set $\widetilde{V} (z_0, \lambda) $
is a convex closed Jordan domain surrounded by the curve
\begin{eqnarray*}
    && ( - \pi , \pi ] \ni \theta \mapsto
\\
&&
    - \left(
    1 - \frac{\lambda \cos ( \theta /2)}{\sqrt{1 - \lambda^2 \sin^2 ( \theta /2) }}
    \right)
    \log
    \left\{
    1  - \frac{e^{i \theta /2} z_0}
    {i \lambda \sin ( \theta /2) - \sqrt{1 - \lambda^2 \sin^2 ( \theta /2) } }
    \right\}
\\
    & & \; \; \;  \; -
    \left(
    1 + \frac{\lambda \cos ( \theta /2)}{\sqrt{1 - \lambda^2 \sin^2 ( \theta /2) }}
    \right)
    \log
    \left\{
    1  - \frac{e^{i \theta /2} z_0}
    {i \lambda \sin ( \theta /2) + \sqrt{1 - \lambda^2 \sin^2 ( \theta /2) } }
    \right\} .
\end{eqnarray*}
\end{customthm}

Theorem \ref{theorem-A}, can be equivalently written as follows.

\begin{customthm}{B}\label{theorem-B}
Let ${\mathbb H} = \{ w \in {\mathbb C} : \text{\rm Re} \, w > 0 \}$. For any $z_0 \in {\mathbb D} \backslash \{ 0 \}$ and $0 \leq \lambda < 1$, the variability region
$$
\left\{
     \int_0^{z_0} \frac{g(\zeta )-g(0)}{\zeta} \, d \zeta
     : \; g \in {\mathcal A}({\mathbb D}) \; \mbox{with} \;
     g(0)=1,\; g'(0)= 2 \lambda,\; g({\mathbb D}) \subset {\mathbb H}
  \right\}
$$
coincides with the same convex closed Jordan domain as in Theorem \ref{theorem-A}.
\end{customthm}

Theorem \ref{theorem-A} is a direct consequence of Theorem \ref{theorem-B} if we choose $g(z) = 1 + zf''(z)/f'(z)$. For similar results, we refer to \cite{Ponnusamy-Vasudevarao-2007,Ponnusamy-Vasudevarao-Yanagihara-2008,Ul-Haq-2014,Yanagihara:bounded,Yanagihra:unif_convex} and the references therein.

\bigskip
Recently, the present authors \cite{Ali-Vasudevarao-Yanagihara-2018} extended Theorem \ref{theorem-B} to the most general setting. 
\bigskip

Let $\Omega $ be a simply connected domain in ${\mathbb C}$ with $\Omega \not= {\mathbb C}$, and $P$ be a conformal map of ${\mathbb D}$ onto $\Omega$. Let ${\mathcal F}_\Omega$ be the class of analytic functions $g$  in ${\mathbb D}$ with $g( {\mathbb D}) \subset \Omega$. Then the map $P^{-1} \circ g$ maps ${\mathbb D}$ into ${\mathbb D}$.
For $c  = (c_0,c_1, \ldots , c_n ) \in {\mathbb C}^{n+1}$, let
$$
   {\mathcal F}_\Omega (c )
   =
   \{ g \in {\mathcal F}_\Omega : \;
   (P^{-1} \circ g) (z) = c_0 + c_1z+\cdots +c_nz^n + \cdots \;
   \text{in} \; {\mathbb D}  \}.
$$
Let $H^\infty ({\mathbb D})$ be the Banach space of analytic functions $f$ in ${\mathbb D}$ with the norm $\| f \|_\infty = \sup_{z \in {\mathbb D}} |f(z)|$, and $H_1^\infty ({\mathbb D})$ be the closed unit ball of $H^\infty ({\mathbb D})$, i.e., $H_1^\infty ({\mathbb D}) = \{ \omega \in H^\infty ({\mathbb D}) : \| \omega \|_\infty \leq 1 \}$. We note that the coefficient body ${\mathcal C}(n)$ defined by
\begin{align*}
 {\mathcal C}(n)=
&
 \{ c  = (c_0,c_1, \ldots , c_n ) \in {\mathbb C}^{n+1}: \; \mbox{where there exists $\omega \in H_1^\infty ({\mathbb D})$}\\
&\quad
\mbox{such that } \; \omega (z) = c_0+c_1 z + \cdots + c_n z^n  + \cdots \; \mbox{in} \; {\mathbb D}
 \}
\end{align*}
is a compact and convex subset of ${\mathbb C}^{n+1}$. The coefficient body ${\mathcal C}(n)$ has been completely  characterized Schur \cite{Schur-1917,Schur-1986}. For a detailed treatment, we refer to \cite[Chapter I]{Foias-Brazho-book} and \cite[Chapter 1]{Bakonyi-Constantinescu-1992}.

\bigskip
We call $c  = (c_0, \ldots , c_n )$ the Carath\'{e}odory data of length $n+1$.
For a given Carath\'{e}odory data $c =(c_0,\ldots ,c_n) \in {\mathbb C}^{n+1}$,
the Schur parameter $\gamma = (\gamma_0 , \ldots , \gamma_k)$,
$k=0,1, \ldots , n$ is defined as follows. 
\bigskip

For $j=0,1,\ldots$, define recursively
$c^{(j)} = (c_0^{(j)},c_1^{(j)} \ldots , c_{n-j}^{(j)})$ and $\gamma_j = c_0^{(j)}$ by
\begin{equation}\label{eq:c_p_j+1}
 c_0^{(j)} = \frac{c_1^{(j-1)}}{1-|\gamma_{j-1}|^2}, \quad
 c_p^{(j)} = \frac{c_{p+1}^{(j-1)}
 + \overline{\gamma_{j-1}} \sum_{\ell =1}^p c_{p-\ell}^{(j)} c_\ell^{(j-1)}}{1-|\gamma_{j-1}|^2}
    \quad (1 \leq p \leq n- j),
\end{equation}
with $c ^{(0)} =c = (c_0,\ldots ,c_n)$. In the $j$th step ($j=0,1,\ldots$),
if $|\gamma_j| > 1$, then we put $k=j$ and $\gamma = ( \gamma_0, \ldots , \gamma_j )$;
if $|\gamma_j | =1$, then we put $k=n$, and for $p=j+1, \ldots , n$, we take
$$
\gamma_p =
\begin{cases}
   \infty , & \mbox{if} \; \, c_{p-j}^{(j)} \not= 0 \\[2mm]
   0, & \mbox{if} \; \, c_{p-j}^{(j)} =0;
\end{cases}
$$
if $|\gamma_j | < 1$ then we proceed to $(j+1)$th step and so on.
Applying this procedure recursively we obtain  the Schur parameter
$\gamma = ( \gamma_0 , \ldots , \gamma_k )$, $k=0, \ldots , n$ of $c =(c_0, \ldots , c_n)$.

\bigskip
When $|\gamma_0|<1, \ldots , |\gamma_n| < 1$, $c=(c_0, \ldots , c_n )= c^{(0)}$ and $\gamma = (\gamma_0, \ldots , \gamma_n )$ are determined uniquely each other. For an explicit representation of $\gamma$ in terms of $c$, we refer to \cite{Schur-1917,Schur-1986}. For given $c=(c_0, \ldots , c_n) \in {\mathbb C}^{n+1}$, Schur \cite{Schur-1917,Schur-1986} proved that $c  \in \text{Int} \, {\mathcal C}(n)$, $c  \in \partial {\mathcal C}(n)$ and $c  \not\in {\mathcal C}(n)$ are respectively equivalent to the conditions $\mathrm{\mathbf{(C1)}}$ $k=n$ and $|\gamma_i|<1$ for $i=1,2,\ldots,n$ $\mathrm{\mathbf{(C2)}}$, $k=n$ and $|\gamma_0|<1, \ldots , |\gamma_{i-1}|<1$, $|\gamma_i|=1$, $\gamma_{i+1}= \cdots = \gamma_n=0$ for some $i=0, \ldots , n$ and $\mathrm{\mathbf{(C3)}}$ the hypotheses that either $\mathrm{\mathbf{(C1)}}$ or $\mathrm{\mathbf{(C2)}}$ does not hold. Furthermore, for $c  \in \text{Int} \, {\mathcal C}(n)$, the Schur parameter can be computed as follows. 
\bigskip

Let $\omega \in H_1^\infty ({\mathbb D})$ be such that $\omega (z) = c_0+ c_1 z + \cdots + c_n z^n + \cdots$. Define
$$
\omega_0(z) = \omega(z) \quad\mbox{and}\quad
\omega_k (z) = \displaystyle  \frac{\omega_{k-1} (z)-\omega_{k-1}(0)}{z(1-\overline{\omega_{k-1}(0) }\omega_{k-1}(z))}
\quad (k=1,2,\ldots,n).
$$
Then
$$
 \gamma_p = \omega_p(0), \quad \omega_p(z) = c_0^{(p)}+c_1^{(p)}z+ \cdots + c_{n-p}^{(p)}z^{n-p} + \cdots
$$
hold for $p=0,1, \ldots , n$. For a detailed proof, we refer to \cite[Chapter 1]{Foias-Brazho-book}.

\bigskip
For $a \in {\mathbb D}$, define $\sigma_a \in \mbox{Aut} ( {\mathbb D})$ by
$$
\sigma_a (z) = \frac{z+a}{1+ \overline{a}z},  \quad z \in {\mathbb D} .
$$
For $\varepsilon \in \overline{\mathbb D}$ and the Schur parameter $\gamma =( \gamma_0,\ldots , \gamma_n)$ of $c  \in \text{Int} \, {\mathcal C}(n)$, let
\begin{align}
\omega_{\gamma , \varepsilon }(z)
    =&  \sigma_{\gamma_0} ( z \sigma_{\gamma_1} ( \cdots z \sigma_{\gamma_{n}} ( \varepsilon z) \cdots )), \quad z \in {\mathbb D} ,
\label{def:extremal_omega}
\\
Q_{\gamma , j} (z, \varepsilon )
    =& \int_0^z \zeta^j \{ P(\omega_{\gamma,\varepsilon} (\zeta )) - P(c_0) \}\, d \zeta , \quad z \in {\mathbb D} \; \text{and} \;
    \varepsilon \in \overline{\mathbb D}.
\label{def:extremal_Q}
\end{align}
Then $\omega_{\gamma , \varepsilon } \in H_1^\infty ({\mathbb D})$ with Carath\'{e}odory data $c$, i.e., $\omega_{\gamma , \varepsilon }(z) = c_0+c_1z+\cdots + c_nz^n + \cdots$. By using the Schur algorithm, recently the present authors \cite{Ali-Vasudevarao-Yanagihara-2018} obtained the following general result for the region of variability.

\begin{customthm}{C}\cite{Ali-Vasudevarao-Yanagihara-2018}\label{thm:Main_theorem}
Let $n \in {\mathbb N} \cup \{ 0 \}$, $j \in \{-1,0, 1,2 , \ldots \}$, and $c  =(c_0, \ldots , c_n) \in {\mathbb C}^{n+1}$ be a Carath\'{e}odory data. Let $\Omega $ be a convex domain in ${\mathbb C}$ with $\Omega \not= {\mathbb C}$, and $P$ be a conformal map of ${\mathbb D}$ onto $\Omega$. For each fixed $z_0 \in {\mathbb D} \backslash \{0 \}$, let
\begin{equation*}
 V_\Omega^j (z_0, c )
 =
 \left\{
   \int_0^{z_0} \zeta^j \left( g( \zeta ) - g(0) \right) \, d \zeta
   : \; g \in {\mathcal F}_\Omega  (c )
  \right\} .
\end{equation*}

\begin{enumerate}[{\rm (i)}]

\item If $c  = (c_0, \ldots , c_n ) \in \text{\rm Int} \, {\mathcal C}(n)$
and $\gamma =(\gamma_0, \ldots , \gamma_n )$ be the Schur parameter of $c$,
then $Q_{\gamma , j}(z_0, \varepsilon )$ defined by (\ref{def:extremal_omega})
is a convex univalent function of $\varepsilon \in \overline{\mathbb D}$ and
$$
V_\Omega^j (z_0,c) =   Q_{\gamma , j}(z_0, \overline{\mathbb D} )
  := \{ Q_{\gamma , j}(z_0, \varepsilon ) : \varepsilon \in \overline{\mathbb D} \} .
$$
Furthermore,
$$
\int_0^{z_0} \zeta^j \{ g(\zeta ) - g(0) \} \, d \zeta = Q_{\gamma , j}(z_0, \varepsilon )
$$
for some $g \in {\mathcal F}_\Omega (c )$ and $\varepsilon \in \partial {\mathbb D}$
if, and only if, $g (z) \equiv P( \omega_{\gamma , \varepsilon } (z ))$.

\item If $c  \in \partial {\mathcal C}(n)$ and $\gamma =( \gamma_0, \ldots , \gamma_i, 0, \ldots , 0 )$
is  the Schur parameter of $c$, then $V_\Omega^j (z_0,c )$ reduces to a set consists of a single point $w_0$,
where
$$
w_0 = \int_0^{z_0} \zeta^j
 \{ P( \sigma_{\gamma_0} ( \zeta \sigma_{\gamma_1}(\cdots \zeta \sigma_{\gamma_{i-1}} (\gamma_i \zeta  ) \cdots )))- P(c_0) \} \, d \zeta.
$$

\item If $c  \not\in {\mathcal C}(n)$ then $V_\Omega^j (z_0,c ) = \emptyset$.

\end{enumerate}
\end{customthm}

In the present article, we first show that in the case $n=0$, $j=-1$ and $c=0$, the conclusion of Theorem \ref{thm:Main_theorem} holds  when one weakens the assumption that $\Omega$ is convex to that of starlikeness of $\Omega$ with respect to $P(0)$. We then present several applications of Theorems \ref{theorem-B} and \ref{thm:of_order_zero} to obtain the precise variability region of different quantities for several well-known subclasses of analytic and univalent functions. We also obtain certain subordination results.

\section{Main Results}

\begin{theorem}\label{thm:of_order_zero}
Let $b \in {\mathbb C}$, $z_0 \in {\mathbb D} \backslash \{ 0 \}$
and $\Omega$ be a starlike domain with respect to $b$ satisfying $\Omega \not= {\mathbb C}$.
Let $P$ be a conformal map of ${\mathbb D}$ onto $\Omega$ with $P(0) =b$. Then the region of variability
$$
V_\Omega^{-1} (z_0,0)  = \left\{ \int_0^{z_0} \frac{g(\zeta ) - b}{\zeta } \, d \zeta : g \in {\mathcal F}_\Omega , \; g(0) = b \right\}
$$
is a convex closed Jordan domain, and coincides with the set $K ( \overline{\mathbb D}(0, |z_0| ) )$,
where $K(z) = \int_0^z \zeta^{-1} (P(\zeta ) -b) \, d \zeta $. Furthermore, for $| \varepsilon | =1$ and
$g \in {\mathcal F}_\Omega$ with $g(0) = b$, the relation
$\int_0^{z_0} \zeta^{-1}(g(\zeta ) - b) \, d \zeta = K(\varepsilon z_0)$ holds if, and only if,
$g(z) \equiv P ( \varepsilon z )$.
\end{theorem}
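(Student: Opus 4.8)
The plan is to reduce the entire statement to the Schwarz lemma once a single subordination is in hand. Since $g(0)=b=P(0)$ and $g(\mathbb{D})\subset\Omega=P(\mathbb{D})$, the function $\omega:=P^{-1}\circ g$ is an analytic self-map of $\mathbb{D}$ with $\omega(0)=0$, and conversely every such $\omega$ produces an admissible $g=P\circ\omega$; thus $\{g\in\mathcal{F}_\Omega:g(0)=b\}$ is exactly $\{P\circ\omega:\omega\in\mathcal{B}_0\}$, where $\mathcal{B}_0$ denotes the family of Schwarz functions. First I would record the shape of $K$. Starlikeness of $\Omega$ with respect to $b=P(0)$ is equivalent to $\mathrm{Re}\,\{zP'(z)/(P(z)-b)\}>0$ in $\mathbb{D}$, and a direct computation from $zK'(z)=P(z)-b$ gives $1+zK''(z)/K'(z)=zP'(z)/(P(z)-b)$. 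Hence $K$ is convex univalent (equivalently, $K$ is the Alexander transform of the starlike function $(P-b)/P'(0)$). By Study's theorem $K$ maps $\overline{\mathbb{D}}(0,|z_0|)$ onto a closed convex Jordan domain, which already settles the asserted shape of the candidate set $K(\overline{\mathbb{D}}(0,|z_0|))$.

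The crux is the subordination $F\prec K$, where $F(z):=\int_0^z\zeta^{-1}(g(\zeta)-b)\,d\zeta$. I would observe that both $F$ and $K$ are Alexander transforms, that is, Hadamard convolutions with the convex function $\ell(z)=-\log(1-z)$: indeed $F=\ell*(g-b)$ and $K=\ell*(P-b)$, both second factors vanishing at the origin. Since $g-b=(P-b)\circ\omega$ with $\omega$ a Schwarz function, we have the subordination $g-b\prec P-b$. The key input is then the Ruscheweyh--Sheil-Small theorem that convolution with a convex univalent function preserves subordination, which applied to the convex $\ell$ yields $F=\ell*(g-b)\prec\ell*(P-b)=K$. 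I expect this subordination to be the main obstacle: it is precisely where the hypothesis enters only through starlikeness of $\Omega$ (via convexity of $K$) rather than convexity of $\Omega$ itself, and one must check that the convolution--subordination machinery applies with the present normalizations.

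Granting $F\prec K$ the rest is routine. Because $K$ is univalent, $p:=K^{-1}\circ F$ is a well-defined analytic self-map of $\mathbb{D}$ with $p(0)=0$, so by the Schwarz lemma $|p(z_0)|\le|z_0|$ and therefore $F(z_0)=K(p(z_0))\in K(\overline{\mathbb{D}}(0,|z_0|))$; this gives $V_\Omega^{-1}(z_0,0)\subset K(\overline{\mathbb{D}}(0,|z_0|))$. For the reverse inclusion I would take $\omega(z)=\varepsilon z$ with $\varepsilon\in\overline{\mathbb{D}}$: then $g=P(\varepsilon\cdot)\in\mathcal{F}_\Omega$ and a change of variable gives $F(z_0)=K(\varepsilon z_0)$, so as $\varepsilon$ runs over $\overline{\mathbb{D}}$ the whole set $K(\overline{\mathbb{D}}(0,|z_0|))$ is attained. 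Finally, for the extremal characterization, suppose $F(z_0)=K(\varepsilon z_0)$ with $|\varepsilon|=1$; univalence of $K$ forces $p(z_0)=\varepsilon z_0$, hence $|p(z_0)|=|z_0|$, and the equality case of the Schwarz lemma gives $p(z)\equiv\varepsilon z$. Then $F(z)\equiv K(\varepsilon z)$, and differentiating together with the identity $uK'(u)=P(u)-b$ yields $g(z)-b=P(\varepsilon z)-b$, i.e.\ $g(z)\equiv P(\varepsilon z)$; the converse is immediate.
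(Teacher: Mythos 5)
Your overall architecture coincides with the paper's: establish the subordination $F\prec K$ for $F(z)=\int_0^z\zeta^{-1}(g(\zeta)-b)\,d\zeta$, then use univalence of $K$ together with the Schwarz lemma for the inclusion $V_\Omega^{-1}(z_0,0)\subset K(\overline{\mathbb D}(0,|z_0|))$, the rotations $g_\varepsilon=P(\varepsilon\,\cdot)$ for the reverse inclusion, and the equality case of the Schwarz lemma for the uniqueness assertion. All of those parts are correct, as is your verification that $K$ is convex univalent via $1+zK''(z)/K'(z)=zP'(z)/(P(z)-b)$ and the starlikeness of $\Omega$ with respect to $b$.

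The gap is exactly at the step you flag as the crux. The Ruscheweyh--Sheil-Small convolution--subordination theorem asserts: if $\phi$ is convex univalent, $G$ is \emph{convex} univalent and $f\prec G$, then $\phi*f\prec \phi*G$. You apply it with $\phi=\ell$ and dominant $G=P-b$, but $P-b$ maps ${\mathbb D}$ onto $\Omega-b$, which is only assumed starlike with respect to $0$; convexity of the dominant is a hypothesis of that theorem, not a removable convenience, and it is precisely the hypothesis the present theorem is designed to dispense with (the convex case is already covered by Theorem C). If a version with merely starlike dominant held for every convex $\phi$, it would subsume Suffridge's theorem as a special case; no such general statement is available, so as written the key subordination $F\prec K$ is unjustified. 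The tool the paper uses instead is Suffridge's theorem \cite{Suffridge-1970}: if $G$ is convex univalent, $f(0)=G(0)$ and $zf'(z)\prec zG'(z)$, then $f\prec G$. Here $K$ is convex by your Alexander computation, $zK'(z)=P(z)-b$, and $zF'(z)=g(z)-b\prec P(z)-b=zK'(z)$, so Suffridge's theorem yields $F\prec K$ directly; note that its convexity hypothesis is imposed on the \emph{integrated} function $K$ rather than on the dominant $P-b$, which is exactly why starlikeness of $\Omega$ suffices. With this one substitution the remainder of your argument goes through unchanged and agrees with the paper's proof.
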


\begin{proof}
Let $g \in {\mathcal A} ({\mathbb D})$ be such that $g(0)=b$ and $g({\mathbb D}) \subset \Omega$.
Then $g \prec P$, i.e., $g$ is subordinate to $P$. By using a result of Suffridge \cite{Suffridge-1970}, we may conclude
$$
\int_0^z \frac{ g(\zeta) -b }{\zeta} \, d \zeta \prec  K(z) := \int_0^z \frac{ P(\zeta) -b }{\zeta} \, d \zeta.
$$
Thus there exists  $\omega \in H_1^\infty ({\mathbb D})$ with $\omega(0)=0$ and
$\int_0^z \zeta^{-1} \{g(\zeta) -b \}\, d \zeta = K(\omega(z))$. From this it follows that
$$
  V_\Omega^{-1} (z_0,0)  \subset
      \{ K(\omega (z_0) ) : \omega \in H_1^\infty ({\mathbb D}) \; \mbox{and} \; \omega(0) = 0 \}
      = K\left( \overline{\mathbb D}(0,|z_0| )\right) .
$$
For $\varepsilon \in \overline{\mathbb D}$, let $g_\varepsilon (z) = P(\varepsilon z)$.
Then $g_\varepsilon (0) = P(0) = b$ and $g_\varepsilon ({\mathbb D}) = P({\mathbb D})=\Omega$. Therefore
$$
K(\varepsilon z_0)=\int_0^{\varepsilon z_0} \frac{ P(\zeta) -b }{\zeta} \, d \zeta
= \int_0^{z_0} \frac{ g_\varepsilon(\zeta) -b }{\zeta} \, d \zeta \in V_\Omega^{-1} (z_0, 0),
$$
and hence $K(\overline{\mathbb D}(0,|z_0|)) \subset V_\Omega^{-1} (z_0,0)$.

\bigskip
We now deal with the uniqueness. Suppose that
\begin{equation}\label{eq:boundary_point}
\int_0^{z_0} \frac{g( \zeta ) - b}{\zeta} \, d \zeta = K(\varepsilon z_0)
\end{equation}
holds for some $g$ with $g(0)=b$ and $g({\mathbb D}) \subset \Omega $, and $| \varepsilon | =1$.
Then there exists $\omega \in H_1^\infty ({\mathbb D})$ with $\omega(0)=0$ such that
$\int_0^z \zeta^{-1} \{g(\zeta) -b \} \, d \zeta = K(\omega (z))$.
From (\ref{eq:boundary_point}) we have $K( \omega(z_0)) = K ( \varepsilon z_0 )$.
Since $K$ is a convex univalent function,  $\omega(z_0) = \varepsilon z_0$.
It follows from Schwarz's lemma that $\omega (z) \equiv \varepsilon z$.
Consequently $g(z) \equiv P(\varepsilon z)$.
\end{proof}


\subsection{The Class $\mathcal{CV} (\Omega)$}


Suppose that $\Omega$ is a simply connected domain with
$1 \in \Omega$. Define
$$
\mathcal{CV} (\Omega) =
 \left\{ f \in {\mathcal A}_0({\mathbb D}) :  1 + z \frac{f''(z)}{f'(z)} \in \Omega \; \text{for all}
 \; z\in{\mathbb D} \right\} .
$$
Let $P$ be the conformal map of $\mathbb{D}$ onto $\Omega$ with $P(0)=1$.
Then for each $f\in\mathcal{CV}(\Omega)$, we have $1+zf''(z)/f'(z)\prec P$. For $\alpha\in\mathbb{R}$,
let $\mathbb{H}_{\alpha}:=\{z\in\mathbb{C}: {\rm Re\,} z>\alpha\}$ and $\mathbb{H}_{0}=\mathbb{H}$.
When $\Omega = {\mathbb H}$ and $P(z)=(1+z)/(1-z)$, $\mathcal{CV}({\mathbb H}) = \mathcal{CV}$
is the well-known class of normalized convex functions in ${\mathbb D}$.
If $\Omega \subset {\mathbb H}$, then $\mathcal{CV}(\Omega )$ is a subclass of $\mathcal{CV}$.
For $0 \leq \alpha < 1$, $\mathcal{CV}(\alpha):=\mathcal{CV}(\mathbb{H}_{\alpha})$
is the class of convex functions of order $\alpha$.
In this case, $P(z) = \{1+ (1-2 \alpha )z\}/(1-z)$.
If $0 < \beta \leq 1$, then $\mathcal{CV}_{\beta}:=\mathcal{CV}( \{ w \in {\mathbb C} : | \arg \, w| < \pi \beta /2\})$
is the class of strongly convex functions of order $\beta$ and $P$ is given by $P(z) = \{(1+z)/(1-z)\}^\beta$.

\bigskip
As an application of Theorem \ref{thm:of_order_zero} we determine the variability region of
$\log f'(z_0)$ when $f$ ranges over $\mathcal{CV}(\Omega)$.

\begin{theorem}\label{thm-p001}
Let $\Omega$ be a starlike domain with respect to $1$, and $P$ be a conformal map of ${\mathbb D}$ onto $\Omega$
with $P(0)=1$. Then for each fixed $z_0 \in {\mathbb D} \backslash \{ 0 \}$, the region of variability
$$
V_{\mathcal{CV}(\Omega)}(z_0):= \{ \log f'(z_0 ) : f \in \mathcal{CV}( \Omega ) \}
$$
is a convex closed Jordan domain, and coincides with the set $K( \overline{\mathbb D}(0,|z_0|))$,
where $K(z) = \int_0^z \zeta^{-1} (P( \zeta )- 1) \, d \zeta $ is a convex univalent function in
${\mathbb D}$. Furthermore, $\log f'(z_0) = K( \varepsilon z_0)$ for some $|\varepsilon | =1$ and $f \in \mathcal{CV}( \Omega )$
if, and only if, $f(z) = \varepsilon^{-1}F(\varepsilon z)$, where $F(z)= \int_0^z e^{K(\zeta )} \, d \zeta $.
\end{theorem}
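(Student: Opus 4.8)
The plan is to reduce everything to Theorem \ref{thm:of_order_zero} with $b=1$ by means of the substitution $g(z) = 1 + zf''(z)/f'(z)$. First I would observe that for $f \in \mathcal{CV}(\Omega)$ the membership condition guarantees that $1+zf''/f'$ is analytic in ${\mathbb D}$ with image in $\Omega$; in particular $f'$ never vanishes on ${\mathbb D}$, so $\log f'$ is a single-valued analytic function with $\log f'(0)=\log 1 = 0$. Setting $g = 1+zf''/f'$ gives $g \in {\mathcal F}_\Omega$ with $g(0)=1$, and the key identity is
\[
  \frac{g(\zeta)-1}{\zeta} = \frac{f''(\zeta)}{f'(\zeta)} = \bigl(\log f'\bigr)'(\zeta).
\]
Integrating from $0$ to $z_0$ and using $\log f'(0)=0$ yields
\[
  \log f'(z_0) = \int_0^{z_0} \frac{g(\zeta)-1}{\zeta}\,d\zeta,
\]
which is precisely the integral appearing in Theorem \ref{thm:of_order_zero} with $b=1$. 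This shows $V_{\mathcal{CV}(\Omega)}(z_0) \subseteq V_\Omega^{-1}(z_0,0)$.

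Next I would establish the reverse inclusion by exhibiting an inverse construction, thereby promoting the inclusion to a genuine bijection between $\mathcal{CV}(\Omega)$ and $\{g \in {\mathcal F}_\Omega : g(0)=1\}$. Given any such $g$, the function $(g(\zeta)-1)/\zeta$ has a removable singularity at $0$, so one may define
\[
  f(z) = \int_0^z \exp\!\left( \int_0^w \frac{g(\zeta)-1}{\zeta}\,d\zeta \right) dw.
\]
A direct check gives $f(0)=0$, $f'(0)=1$, and $(\log f')'(z) = (g(z)-1)/z$, hence $1+zf''(z)/f'(z) = g(z) \in \Omega$, so $f \in \mathcal{CV}(\Omega)$ and it maps back to $g$. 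This identifies the two variability regions, $V_{\mathcal{CV}(\Omega)}(z_0) = V_\Omega^{-1}(z_0,0)$, and Theorem \ref{thm:of_order_zero} then transfers verbatim: the region is a convex closed Jordan domain equal to $K(\overline{\mathbb D}(0,|z_0|))$ with $K$ convex univalent. I regard this bijection verification — ensuring both the well-definedness forced by $f'\neq 0$ and that the constructed $f$ genuinely lies in $\mathcal{CV}(\Omega)$ — as the only real content beyond quoting the earlier theorem; everything else is bookkeeping.

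For the extremal characterization I would invoke the uniqueness clause of Theorem \ref{thm:of_order_zero}: the equality $\log f'(z_0) = K(\varepsilon z_0)$ with $|\varepsilon|=1$ holds if and only if the associated $g$ satisfies $g(z) \equiv P(\varepsilon z)$. It then remains to solve for $f$. Substituting $u=\varepsilon\zeta$ in the defining integral gives
\[
  \log f'(z) = \int_0^z \frac{P(\varepsilon\zeta)-1}{\zeta}\,d\zeta
             = \int_0^{\varepsilon z} \frac{P(u)-1}{u}\,du = K(\varepsilon z),
\]
so $f'(z) = e^{K(\varepsilon z)}$. Since $F(z)=\int_0^z e^{K(\zeta)}\,d\zeta$ satisfies $F'(\zeta)=e^{K(\zeta)}$, we have $F'(\varepsilon z)=e^{K(\varepsilon z)}$, and the function $f(z)=\varepsilon^{-1}F(\varepsilon z)$ satisfies $f(0)=0$ together with $f'(z)=F'(\varepsilon z)=e^{K(\varepsilon z)}$, matching the required derivative. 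As $f$ is determined by $f'$ and the normalization $f(0)=0$, this is the unique extremal function, completing the proof.
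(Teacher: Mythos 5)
Your proposal is correct and follows essentially the same route as the paper: both reduce the statement to Theorem \ref{thm:of_order_zero} via the bijection $f \mapsto g = 1 + zf''/f'$ between $\mathcal{CV}(\Omega)$ and $\{g \in \mathcal{F}_\Omega : g(0)=1\}$, together with the identity $\log f'(z_0) = \int_0^{z_0} \zeta^{-1}(g(\zeta)-1)\,d\zeta$. You merely spell out the inverse construction and the extremal computation in more detail than the paper, which simply asserts the bijectivity.
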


\begin{proof}
Let $c = 0 \in {\mathbb C}^1$ be a given Carath\'{e}odry data of length one.
Then ${\mathcal F}_\Omega (0)= \{ g \in {\mathcal A}({\mathbb D}):
g({\mathbb D}) \subset \Omega \; \text{and} \; (P^{-1} \circ g) (0) = 0 \}$.
It is easy to see that the map
$$
 \mathcal{CV}( \Omega )  \ni f  \mapsto g(z) = 1 + z \frac{f''(z)}{f'(z)}  \in {\mathcal F}_\Omega  (0)
$$
is bijective. Indeed, since $g(z) = 1+ zf''(z)/f'(z)$ is analytic in ${\mathbb D}$,
$f'(z)$ does not have zeros in ${\mathbb D}$, and so 
\begin{equation}\label{eq:p-005}
 \log f'(z)=  \int_0^z \zeta^{-1} (g( \zeta )-1)\, d \zeta ,
\end{equation}
where $\log f'$ is a single valued branch of the logarithm of $f'$ with $\log f'(0) = 0$.
The conclusions now follows from Theorem \ref{thm:of_order_zero} and (\ref{eq:p-005}).
\end{proof}

As an application of Theorems \ref{thm:Main_theorem} we determine
the variability region of $\log f'(z_0)$, when $f$ ranges over $\mathcal{CV}(\Omega)$
with the conditions $f''(0) = 2 \lambda $ and $f'''(0) = 6 \mu$. Here $z_0 \in {\mathbb D} \backslash \{ 0 \}$,
$\lambda, \mu \in {\mathbb C}$ are arbitrarily preassigned values. By letting $\Omega$ be one of the particular domains
mentioned in the above, we can determine variability regions of $\log f'(z_0)$ for various subclasses of $\mathcal{CV}$.

\bigskip
Let $\Omega$ be a simply connected domain with $\Omega \not= {\mathbb C}$,
and $P$ be a conformal map of ${\mathbb D}$ onto $\Omega$
with $P(z)= \alpha_0 + \alpha_1 z + \alpha_2z^2 + \cdots $.
Let $g$ be an analytic function in ${\mathbb D}$
with $g(z)= b_0+b_1 z + b_2 z^2 +\cdots $
satisfying $g({\mathbb D}) \subset \Omega $.
For simplicity we assume that $P(0)=g(0)$, i.e., $\alpha_0 =b_0$. Let
$$
\omega (z) = (P^{-1}\circ g)(z) = c_0 + c_1 z + c_2 z^2 + \cdots ,\quad z \in {\mathbb D} .
$$
Then
\begin{equation}\label{eq:relation_between_c_and_b}
  c_0 = 0, \quad
  c_1 =  \frac{b_1}{\alpha_1}, \quad
  c_2 = \frac{\alpha_1^2 b_2 - \alpha_2 b_1^2}{\alpha_1^3} .
\end{equation}
By Schwarz's lemma $|b_1| \leq |\alpha_1|$, with equality 
if, and only if, $g(z) = P(\varepsilon z)$ for some $\varepsilon \in \partial {\mathbb D}$.
Let $\gamma = (\gamma_0,\gamma_1,\gamma_2)$ be the Schur parameter of the
Carath\'{e}odory data $c = (0,c_1,c_2)$. Then $\gamma_0 = \omega (0) = c_0 = 0$, and
\begin{align*}
 \omega_1(z) =& \frac{\omega(z)}{z}, \quad \gamma_1 = \omega_1 (0), \\
  \omega_2(z) =& \frac{\omega_1 (z)-\gamma_1}{z(1-\overline{\gamma_1}\omega_1(z))}, \quad \gamma_2 = \omega_2 (0).
\end{align*}
A simple computation shows that
\begin{equation}\label{eq:relation_between_gamma_and_c}
  \gamma_0 = 0 , \quad
  \gamma_1 = c_1 = \frac{b_1}{\alpha_1} , \quad
  \gamma_2 = \frac{c_2}{1-|c_1|^2}
   =
   \frac{ \overline{\alpha_1}
 ( \alpha_1^2 b_2 - \alpha_2 b_1^2 )}
     {\alpha_1^2(|\alpha_1|^2 -|b_1|^2)}.
\end{equation}

\bigskip
For $f \in \mathcal{CV}( \Omega )$ and $k \in {\mathbb N}$, let $a_k(f) = f^{(k)}(0)/k!$.
Let $g(z) = 1 + zf''(z)/f'(z) = 1+b_1z +b_2z^2 +\cdots $. Then
\begin{equation}\label{eq:relation_between_b_and_a}
  b_1 = 2 a_2(f) \quad\mbox{and}\quad b_2 = 6 a_3(f) -4 a_2(f)^2.
\end{equation}
From (\ref{eq:relation_between_gamma_and_c}) and (\ref{eq:relation_between_b_and_a}), we have
\begin{equation}\label{eq:p-025}
  \gamma_0 = 0 , \quad
  \gamma_1 =  \frac{2 a_2(f) }{\alpha_1} ,  \quad
  \gamma_2 =
 \frac{ 2\overline{\alpha_1}
 \{ 3\alpha_1^2 a_3(f) - 2(\alpha_1^2+\alpha_2) a_2(f)^2 \}}
 {\alpha_1^2(|\alpha_1|^2-4|a_2(f)|^2)} .
\end{equation}
Let
$$
  {\mathcal A} (2 , \Omega)
  = \{ a_2(f) : f \in \mathcal{CV} ( \Omega )\}.
$$
Then by the Schwarz lemma,
${\mathcal A} (2 , \Omega) = \overline{\mathbb D}(0, |\alpha_1|/2)$.
For $f \in \mathcal{CV}(\Omega)$ and
$\lambda \in \partial {\mathcal A} (2 , \Omega)$,
$a_2 (f) = \lambda $
 if, and only if,
$f(z) \equiv \gamma_1^{-1}F(\gamma_1 z)$,
where $\gamma_1 = 2 \lambda / \alpha_1$.
By applying Theorem \ref{thm:Main_theorem} with $n=1$ and $j=-1$
we obtain the following generalization of Theorem \ref{theorem-A}.

\begin{theorem}\label{thm-p005}
Let $\Omega$ be a convex domain with $1 \in \Omega$ and $P$ be a conformal map of ${\mathbb D}$ onto
$\Omega$ with $P(z)=1+ \alpha_1 z + \cdots $. For $\lambda \in {\mathbb C}$ with $|\lambda | \leq |\alpha_1|/2 $
and $z_0 \in {\mathbb D} \backslash \{ 0 \}$, consider the variability region
$$
V_{\mathcal{CV}(\Omega)}(z_0,\lambda):=
\{ \log f'(z_0) : f \in \mathcal{CV}( \Omega ) \; \mbox{with} \; a_2(f) = \lambda \}.
$$
\begin{enumerate}[{\rm (i)}]

\item If $|\lambda |= |\alpha_1|/2 $, then $V_{\mathcal{CV}(\Omega)}(z_0,\lambda)$
reduces to a set consists of a single point $w_0$, where
$w_0 = \int_0^{z_0} \zeta^{-1} \{ P(\gamma_1 \zeta)- 1 \} \, d \zeta$ with $\gamma_1 = 2 \lambda /\alpha_1$.\\


\item If $|\lambda | < |\alpha_1|/2 $ then
$V_{\mathcal{CV}(\Omega)}(z_0,\lambda) = Q_{\gamma_1}(z_0, \overline{\mathbb D} )$,
where $\gamma_1 = 2 \lambda /\alpha_1$ and
$$
 Q_{\gamma_1}(z_0, \varepsilon ) = \int_0^{z_0} \zeta^{-1} \left\{ P \left( \zeta
 \frac{\varepsilon \zeta + \gamma_1} {1+ \overline{\gamma_1} \varepsilon \zeta} \right) -1 \right\} \, d \zeta
$$
is a convex, univalent and analytic function of $\varepsilon \in \overline{\mathbb D}$. Furthermore,
$$
 \log f'(z_0)  =  Q_{\gamma_1}(z_0, \varepsilon )
$$
holds for some $\varepsilon \in \partial {\mathbb D}$ and $f \in \mathcal{CV}( \Omega )$ with
$a_2(f) =  \lambda $, if, and only if,
$$
   f(z)  = \int_0^z  e^{Q_{\gamma_1}(\zeta , \varepsilon )} \, d \zeta , \quad z \in {\mathbb D}.
$$
\end{enumerate}
\end{theorem}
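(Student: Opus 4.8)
The plan is to deduce the whole statement from Theorem~\ref{thm:Main_theorem} applied with $n=1$ and $j=-1$, after installing the dictionary between $\mathcal{CV}(\Omega)$ and a suitable class $\mathcal{F}_\Omega(c)$. First I would recall, exactly as in the proof of Theorem~\ref{thm-p001}, that the assignment $f \mapsto g(z) = 1 + zf''(z)/f'(z)$ is a bijection of $\mathcal{CV}(\Omega)$ onto $\{ g \in \mathcal{F}_\Omega : g(0)=1 \}$, and that (\ref{eq:p-005}) gives $\log f'(z_0) = \int_0^{z_0} \zeta^{-1}(g(\zeta)-1)\,d\zeta$. Since $g(0)=P(0)=1$, this is precisely the integral defining $V_\Omega^{-1}(z_0,c)$ in Theorem~\ref{thm:Main_theorem}, so the computation of $V_{\mathcal{CV}(\Omega)}(z_0,\lambda)$ reduces to identifying the correct Carath\'{e}odory data and invoking that theorem.

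Next I would translate the side condition $a_2(f)=\lambda$ into a constraint on the coefficients of $\omega=P^{-1}\circ g$. Writing $g(z)=1+b_1z+\cdots$, relation (\ref{eq:relation_between_b_and_a}) gives $b_1=2a_2(f)=2\lambda$, while (\ref{eq:relation_between_c_and_b}) gives $c_0=0$ and $c_1=b_1/\alpha_1=2\lambda/\alpha_1$. Since only the first Taylor coefficient of $f''$ is prescribed, the relevant data has length $n+1=2$, namely $c=(0,2\lambda/\alpha_1)\in\mathbb{C}^2$; no constraint on $c_2$ (equivalently $a_3(f)$) is imposed. By (\ref{eq:relation_between_gamma_and_c}) the Schur parameter is $\gamma=(\gamma_0,\gamma_1)$ with $\gamma_0=0$ and $\gamma_1=c_1=2\lambda/\alpha_1$, so $|\gamma_1|=2|\lambda|/|\alpha_1|$. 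The trichotomy of Schur's conditions then reads: $|\lambda|<|\alpha_1|/2$ gives $|\gamma_1|<1$, i.e. $c\in\text{\rm Int}\,\mathcal{C}(1)$ (condition $\mathbf{(C1)}$), while $|\lambda|=|\alpha_1|/2$ gives $|\gamma_1|=1$, i.e. $c\in\partial\mathcal{C}(1)$ (condition $\mathbf{(C2)}$ with $i=1$, which is immediate once $\gamma_0=0$, $|\gamma_1|=1$ is recorded).

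With this dictionary both parts follow by specializing the extremal objects of Theorem~\ref{thm:Main_theorem} to $\gamma_0=0$. For part (i), $c\in\partial\mathcal{C}(1)$ and Theorem~\ref{thm:Main_theorem}(ii) yields a single point; since $\sigma_{\gamma_0}=\sigma_0=\mathrm{id}$, the innermost expression $\sigma_{\gamma_0}(\gamma_1\zeta)$ collapses to $\gamma_1\zeta$, giving $w_0=\int_0^{z_0}\zeta^{-1}\{ P(\gamma_1\zeta)-1 \}\,d\zeta$. For part (ii), $c\in\text{\rm Int}\,\mathcal{C}(1)$ and Theorem~\ref{thm:Main_theorem}(i) gives $V_{\mathcal{CV}(\Omega)}(z_0,\lambda)=Q_{\gamma,-1}(z_0,\overline{\mathbb{D}})$ together with the convexity and univalence of $\varepsilon\mapsto Q_{\gamma,-1}(z_0,\varepsilon)$; again using $\sigma_{\gamma_0}=\mathrm{id}$ in (\ref{def:extremal_omega}) reduces $\omega_{\gamma,\varepsilon}(\zeta)$ to $\zeta(\varepsilon\zeta+\gamma_1)/(1+\overline{\gamma_1}\varepsilon\zeta)$, so that $Q_{\gamma,-1}(z_0,\varepsilon)$ is exactly the stated $Q_{\gamma_1}(z_0,\varepsilon)$ (recall $P(c_0)=P(0)=1$). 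The image of a closed disk under a convex univalent map is a convex closed Jordan domain, which supplies the topological conclusion.

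Finally, for the extremal characterization I would transport the ``if and only if'' clause of Theorem~\ref{thm:Main_theorem}(i): equality $\log f'(z_0)=Q_{\gamma_1}(z_0,\varepsilon)$ with $|\varepsilon|=1$ holds iff the corresponding $g$ satisfies $g(z)\equiv P(\omega_{\gamma,\varepsilon}(z))$. Feeding this back into (\ref{eq:p-005}) gives $\log f'(z)=Q_{\gamma_1}(z,\varepsilon)$ identically, hence $f'(z)=e^{Q_{\gamma_1}(z,\varepsilon)}$ and, using $f(0)=0$, $f(z)=\int_0^z e^{Q_{\gamma_1}(\zeta,\varepsilon)}\,d\zeta$, as claimed. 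I do not anticipate a genuine obstacle: the analytic content is entirely in Theorem~\ref{thm:Main_theorem}, and the only care required is the faithful bookkeeping of the specialization $\gamma_0=0$, $n=1$, $j=-1$, together with confirming that prescribing $a_2(f)$ alone matches precisely the length-two data, so that neither more nor fewer coefficients are frozen.
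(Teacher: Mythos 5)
Your proposal is correct and follows exactly the route the paper intends: the paper derives Theorem \ref{thm-p005} by applying Theorem \ref{thm:Main_theorem} with $n=1$, $j=-1$ to the data $c=(0,2\lambda/\alpha_1)$ obtained from (\ref{eq:relation_between_c_and_b})--(\ref{eq:relation_between_b_and_a}), using the bijection $f\mapsto 1+zf''(z)/f'(z)$ and (\ref{eq:p-005}) just as you do. Your write-up simply makes explicit the bookkeeping ($\gamma_0=0$, the dichotomy $|\gamma_1|<1$ versus $|\gamma_1|=1$, and the collapse of $\sigma_{\gamma_0}$ to the identity) that the paper leaves to the reader.
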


Next let
$$
  {\mathcal A} (3 , \Omega) =
  \{ (a_2(f), a_3(f) ) \in {\mathbb C}^2 : f \in \mathcal{CV}( \Omega )  \},
$$
and for $\lambda , \mu \in {\mathbb C}$, let $\gamma_1 := \gamma_1 (\lambda,\mu)$
and $\gamma_2 := \gamma_2 (\lambda , \mu )$ be given by
\begin{equation}\label{eq:p-010}
\gamma_1 = \frac{2 \lambda }{\alpha_1}
\end{equation}
and
\begin{equation}\label{eq:p-015}
\gamma_2 =
  \begin{cases}
  \displaystyle \frac{ 2\overline{\alpha_1} \{ 3\alpha_1^2 \mu - 2(\alpha_1^2+\alpha_2)\lambda^2 \}}
 {\alpha_1^2(|\alpha_1|^2-4|\lambda|^2)}, \quad & \text{if} \; |\gamma_1| < 1 , \\[2mm]
 0 , &   \text{if} \; |\gamma_1| =1  \; \text{and} \;  3\alpha_1^2 \mu = 2(\alpha_1^2+\alpha_2)\lambda^2 , \\[2mm]
  \infty , &   \text{if} \; |\gamma_1| = 1  \; \text{and} \;  3\alpha_1^2 \mu \not= 2(\alpha_1^2+\alpha_2)\lambda^2 .
  \end{cases}
\end{equation}
Then $(\lambda , \mu ) \in {\mathcal A} (3 , \Omega)$ if, and only if, one of the following conditions holds.
\begin{enumerate}[{\rm (a)}]

\item $|\gamma_1 (\lambda , \mu ) | = 1 $ and  $\gamma_2 ( \lambda , \mu )=0$.\\[-3mm]

\item $|\gamma_1 (\lambda , \mu ) | < 1 $ and  $ | \gamma_2 ( \lambda , \mu ) |=1$.\\[-3mm]

\item $|\gamma_1 (\lambda , \mu ) | < 1 $ and  $| \gamma_2 ( \lambda , \mu )| < 1$.

\end{enumerate}
In case (a), for $f \in \mathcal{CV}( \Omega )$, $(a_2(f), a_3(f)) = (\lambda, \mu )$
holds if, and only if, $g(z) = P(\gamma_1 z )$, i.e., $f(z) = \gamma_1 F(\gamma_1 z)$,
where $\gamma_1 = \gamma_1(\lambda , \mu )$. Similarly, in case (b), for $f \in \mathcal{CV}( \Omega )$,
$(a_2(f), a_3(f)) = (\lambda , \mu ) $ holds if, and only if,
$g(z) = P(z \sigma_{\gamma_1}( \gamma_2 z) )$, i.e.,
$$
f(z) = \int_0^z
\exp\left[ \int_0^{\zeta_1} \zeta_2^{-1} \{ P(\zeta_2 \sigma_{\gamma_1}(
\gamma_2 \zeta_2) )-1\} \, d \zeta_2 \right] \, d \zeta_1.
$$
We note that $(\lambda , \mu ) \in \partial {\mathcal A} (3 , \Omega)$ if, and only if, either (a) or (b) holds.

\bigskip
Suppose that (c) holds, i.e., $(\lambda , \mu ) \in \text{Int} \, {\mathcal A} (3 , \Omega)$.
Then for $f \in \mathcal{CV}( \Omega )$, $(a_2(f), a_3(f)) = (\lambda , \mu)$
holds if, and only if, there exists $\omega^* \in H_1^\infty({\mathbb D})$ such that
$$
g(z) =  1 + \frac{zf''(z)}{f'(z)}
     =  P(z \sigma_{\gamma_1}(z \sigma_{\gamma_2}(z \omega^*(z))) ) .
$$
Let
\begin{equation}\label{eq:p-020}
 Q_{\gamma_1, \gamma_2}(z, \varepsilon ) =
 \int_0^z \zeta^{-1} \{ P( \zeta \sigma_{\gamma_1}(\zeta \sigma_{\gamma_2}
 ( \varepsilon \zeta )) ) -1 \} \, d \zeta , \quad
 z \in {\mathbb D} \; \text{and} \; \varepsilon \in \overline{\mathbb D} .
\end{equation}
Then for any fixed $\varepsilon \in \overline{\mathbb D}$, $Q_{\gamma_1, \gamma_2}(z, \varepsilon )$ is an
analytic function of $z \in {\mathbb D}$, and for each fixed $z \in {\mathbb D}$, $Q_{\gamma_1, \gamma_2}(z, \varepsilon )$ is an
analytic function of $\varepsilon \in \overline{\mathbb D}$. Thus by Theorem \ref{thm:Main_theorem} we have the following.

\begin{theorem}
Let $\Omega$ be a convex domain with $1 \in \Omega$ and $P$ be a conformal map of ${\mathbb D}$ onto
$\Omega$ with $P(z)=1+ \alpha_1 z + \cdots $.
Let $(\lambda , \mu ) \in {\mathbb C}^2$ and $\gamma_1 = \gamma_1(\lambda , \mu)$
and $\gamma_2 = \gamma_2(\lambda , \mu)$ defined by (\ref{eq:p-010}) and (\ref{eq:p-015}) respectively.
For $z_0 \in {\mathbb D} \backslash \{ 0 \}$, consider the variability region
$$
V_{\mathcal{CV}(\Omega)}(z_0,\lambda,\mu):=
\{ \log f'(z_0) : f \in \mathcal{CV}( \Omega ) \; \text{with} \; (a_2(f),a_3(f)) = (\lambda , \mu) \}.
$$
\begin{enumerate}[{\rm (i)}]

\item If $|\gamma_1(\lambda , \mu)|=1$ and $|\gamma_2(\lambda , \mu)| =0$,
then $V_{\mathcal{CV}(\Omega)}(z_0,\lambda,\mu)$
reduces to a set consists of a single point $w_0$, where
$w_0 = \int_0^{z_0} \zeta^{-1} \{ P(\gamma_1 \zeta)- 1 \} \, d \zeta$.\\

\item If $|\gamma_1(\lambda , \mu)| <1$ and $|\gamma_2(\lambda , \mu)| =1$,
then $V_{\mathcal{CV}(\Omega)}(z_0,\lambda,\mu)$
reduces to a set consists of a single point $w_0$, where
$w_0 = \int_0^{z_0} \zeta^{-1} \{ P(\zeta \sigma_{\gamma_1}( \gamma_2 \zeta))- 1 \} \, d \zeta$.\\

\item If $|\gamma_1(\lambda , \mu)| <1$ and $|\gamma_2(\lambda , \mu)| <1$,
i.e., $(\lambda , \mu ) \in \text{\rm Int} \, {\mathcal A}(3, \Omega)$
then $Q_{\gamma_1, \gamma_2}(z_0, \varepsilon )$ defined by (\ref{eq:p-020}) is a convex, univalent analytic function of
$\varepsilon \in \overline{\mathbb D}$ and
$$
V_{\mathcal{CV}(\Omega)}(z_0,\lambda,\mu) = Q_{\gamma_1, \gamma_2}(z_0, \overline{\mathbb D}) .
$$
Furthermore,
$\log f'(z_0) = Q_{\gamma_1, \gamma_2}(z_0, \varepsilon )$  for some $| \varepsilon | =1$ and
$f \in \mathcal{CV}( \Omega ) $, with $(a_2(f),a_3(f)) = (\lambda , \mu)$, if, and only if,
$$
  f(z)  =   \int_0^z  \exp  \left[  \int_0^{\zeta_1}  \zeta_2^{-1}
  \left\{ P ( z \sigma_{\gamma_1}(z \sigma_{\gamma_2}(\varepsilon \zeta_2 ))) - 1 \right\} \, d \zeta_2 \right] \, d \zeta_1 .
$$
\end{enumerate}
\end{theorem}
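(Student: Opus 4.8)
The plan is to read this off Theorem~\ref{thm:Main_theorem} with $n=2$ and $j=-1$, reusing the bijection that drives the proof of Theorem~\ref{thm-p001}. First I would note that for $f\in\mathcal{CV}(\Omega)$ the function $g(z)=1+zf''(z)/f'(z)$ belongs to $\mathcal{F}_\Omega$ with $g(0)=1=P(0)$, and that $\log f'(z)=\int_0^z\zeta^{-1}(g(\zeta)-1)\,d\zeta$ by~(\ref{eq:p-005}); thus $\log f'(z_0)$ is precisely the integral functional defining $V_\Omega^{-1}(z_0,c)$. Imposing $(a_2(f),a_3(f))=(\lambda,\mu)$ fixes the coefficients $b_1,b_2$ of $g$ through~(\ref{eq:relation_between_b_and_a}), hence fixes the Carath\'eodory data $c=(0,c_1,c_2)\in\mathbb{C}^3$ of $\omega=P^{-1}\circ g$ via~(\ref{eq:relation_between_c_and_b}); equivalently it fixes the Schur parameter $\gamma=(\gamma_0,\gamma_1,\gamma_2)$ recorded in~(\ref{eq:relation_between_gamma_and_c})--(\ref{eq:p-025}). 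Since $c_0=0$ forces $\gamma_0=0$, the map $f\mapsto g\mapsto\omega$ is a bijection of the constrained subclass onto $\mathcal{F}_\Omega(c)$, so that $V_{\mathcal{CV}(\Omega)}(z_0,\lambda,\mu)=V_\Omega^{-1}(z_0,c)$.

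Second, because $|\gamma_0|<1$ automatically, Schur's trichotomy $\mathrm{\mathbf{(C1)}}$--$\mathrm{\mathbf{(C3)}}$ for $c$ collapses exactly to the three alternatives $(\mathrm{a})$, $(\mathrm{b})$, $(\mathrm{c})$ characterizing $\mathcal{A}(3,\Omega)$. I would match them as follows: case~(i) is $(\mathrm{a})$, i.e.\ $c\in\partial\mathcal{C}(2)$ with the boundary attained already at index one ($|\gamma_1|=1$, $\gamma_2=0$); case~(ii) is $(\mathrm{b})$, i.e.\ $c\in\partial\mathcal{C}(2)$ attained at index two ($|\gamma_1|<1$, $|\gamma_2|=1$); and case~(iii) is $(\mathrm{c})$, i.e.\ $c\in\mathrm{Int}\,\mathcal{C}(2)$.

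Third, I would invoke Theorem~\ref{thm:Main_theorem} case by case. In cases~(i) and~(ii), part~(ii) gives a single point $w_0$; substituting $\gamma_0=0$ (so $\sigma_{\gamma_0}=\mathrm{id}$) and $P(c_0)=1$ into its formula, the nested Schur expression collapses to $P(\gamma_1\zeta)$ when the termination index is one and to $P(\zeta\sigma_{\gamma_1}(\gamma_2\zeta))$ when it is two, which are exactly the stated values of $w_0$. (Here one uses that $|\gamma_i|=1$ forces $\sigma_{\gamma_i}\equiv\gamma_i$, so the innermost factor $\zeta\sigma_{\gamma_i}(\cdot)$ becomes $\gamma_i\zeta$.) In case~(iii), part~(i) applies: with $\gamma_0=0$ the extremal symbol $\omega_{\gamma,\varepsilon}(z)=\sigma_{\gamma_0}(z\sigma_{\gamma_1}(z\sigma_{\gamma_2}(\varepsilon z)))$ of~(\ref{def:extremal_omega}) reduces to $z\sigma_{\gamma_1}(z\sigma_{\gamma_2}(\varepsilon z))$, so $Q_{\gamma,-1}(z_0,\varepsilon)$ of~(\ref{def:extremal_Q}) coincides with $Q_{\gamma_1,\gamma_2}(z_0,\varepsilon)$ of~(\ref{eq:p-020}). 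Theorem~\ref{thm:Main_theorem}(i) then yields the convexity and univalence of $\varepsilon\mapsto Q_{\gamma_1,\gamma_2}(z_0,\varepsilon)$ and the identity $V_{\mathcal{CV}(\Omega)}(z_0,\lambda,\mu)=Q_{\gamma_1,\gamma_2}(z_0,\overline{\mathbb D})$. For the uniqueness clause, the ``if and only if'' in Theorem~\ref{thm:Main_theorem}(i) forces $g\equiv P(\omega_{\gamma,\varepsilon})$ for boundary $\varepsilon$; feeding this back through $\log f'(z)=\int_0^z\zeta^{-1}(g(\zeta)-1)\,d\zeta$ and integrating $f'=\exp(\log f')$ produces the displayed formula for $f$.

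The coefficient algebra is routine and has already been carried out in~(\ref{eq:relation_between_c_and_b})--(\ref{eq:p-025}), so the only genuine point of care is the boundary reduction in cases~(i) and~(ii): one must verify that once the Schur data terminate (some $|\gamma_i|=1$ followed by zeros), the finite Blaschke symbol $\omega_{\gamma,\varepsilon}$ truly loses its dependence on $\varepsilon$, so the variability region degenerates to a single point and the associated $g$, hence $f$, is uniquely pinned down. This is the main, though modest, obstacle, and it is dispatched by the explicit collapse $\sigma_{\gamma_i}(\varepsilon z)\equiv\gamma_i$ when $|\gamma_i|=1$, which is the very mechanism built into Theorem~\ref{thm:Main_theorem}(ii).
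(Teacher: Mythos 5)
Your proposal is correct and follows essentially the same route as the paper: the paper sets up the coefficient relations (\ref{eq:relation_between_c_and_b})--(\ref{eq:p-020}) and then simply derives the theorem by applying Theorem~\ref{thm:Main_theorem} with $n=2$, $j=-1$ to the Carath\'eodory data $c=(0,c_1,c_2)$ determined by $(\lambda,\mu)$, exactly as you do. Your case matching, the collapse of $\sigma_{\gamma_0}$ for $\gamma_0=0$, and the boundary degeneration via $\sigma_{\gamma_i}\equiv\gamma_i$ when $|\gamma_i|=1$ all agree with the mechanism built into Theorem~\ref{thm:Main_theorem}(i)--(ii).
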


\begin{remark}
For a simply connected domain $\Omega$ with $1 \in \Omega$, define
$$
\mathcal{S}^*( \Omega ) =
 \left\{ f \in {\mathcal A}_0({\mathbb D} ) : z \frac{f'(z)}{f(z)} \in \Omega\; \text{in} \; {\mathbb D} \right\} .
$$
Then $f \in \mathcal{CV} (\Omega )$ if, and only if, $zf'(z) \in \mathcal{S}^*( \Omega )$. Thus we can easily translate the theorems of this section to results about variability regions of $\log \{ f(z_0)/z_0 \}$ when $f$ ranges over $\mathcal{S}^* ( \Omega )$ with or without the conditions $f''(0) = \lambda$ and $f'''(0) = \mu$.
\end{remark}

\subsection{Uniformly Convex Functions}

For $0\le k < \infty$, the class $k\mbox{-}\mathcal{UCV}$ of $k$-uniformly convex functions is defined
by $\mathcal{CV}(\Omega_k)$, where $\Omega_k:=\{ w \in {\mathbb C} : \text{Re} \, w > k |w-1| \}$.
Notice that $\Omega_k$ is the convex domain containing $1$, bounded by the conic section.
In this case, the conformal map $P_k$ which maps the unit disk $\mathbb{D}$ conformally onto $\Omega_k$
is given by
$$
P_k=
\begin{cases}
\frac{1}{1-k^2}\cosh\left(A\log\frac{1+\sqrt{z}}{1-\sqrt{z}}\right)-\frac{k^2}{1-k^2} & \mbox{for}\quad 0\le k<1\\[3mm]
1+\frac{2}{\pi^2}\left(\log\frac{1+\sqrt{z}}{1-\sqrt{z}}\right)^2 & \mbox{for}\quad k=1\\[3mm]
\frac{1}{k^2-1} \sin \left(\frac{\pi}{2K(x)} \int_0^{u(z)/\sqrt{x}} \frac{dt}{\sqrt{(1-t^2)(1-x^2 t^2)}}\right) + \frac{k^2}{k^2-1} & \mbox{for}\quad 1<k<\infty,
\end{cases}
$$
where $A=(2/\pi)\arccos k$, $u(z)=(z-\sqrt{x})/(1-\sqrt{x}z)$, and $K(x)$ is the elliptical integral defined by
$$
K(x)=\int_0^1 \frac{dt}{\sqrt{(1-t^2)(1-x^2 t^2)}},\quad x\in(0,1).
$$
For more details concerning uniformly convex functions, we refer to \cite{Kanas-Wisniowska-1999} and \cite{Ronning-1993}. When $k=0$, the class $0\mbox{-}\mathcal{UCV}$ is essentially the same as $\mathcal{CV}$. Let $P_k(z)= 1+\alpha_{k1}z+\alpha_{k2}z^2+\cdots$. Then it is a simple exercise to see that
$$
\alpha_{k1}=
\begin{cases}
\displaystyle \frac{2A^2}{1-k^2} & \mbox{for}\quad 0\le k<1\\[3mm]
\displaystyle 8/\pi^2 & \mbox{for}\quad k=1\\[1mm]
\displaystyle \frac{\pi^2}{4(k^2-1)K^2(x)(1+x)\sqrt{x}} & \mbox{for}\quad 1<k<\infty.
\end{cases}
$$
Let $f\in k\mbox{-}\mathcal{UCV}$ be of the form $f(z)=z+a_2z+a_3z^2+\cdots$ and $g(z)=1+zf''(z)/f'(z)$. Then from \eqref{eq:relation_between_c_and_b} and \eqref{eq:relation_between_b_and_a} we obtain $|a_2|\le \alpha_{k1}/2$. For $z_0 \in {\mathbb D} \backslash \{ 0 \}$ and $|\lambda|\le \alpha_{k1}/2$, consider the region of variability
$$
V_{k\mbox{-}\mathcal{UCV}}(z_0,\lambda)=
\{ \log f'(z_0) : f \in k\mbox{-}\mathcal{UCV} \; \mbox{with} \; a_2(f) = \lambda \}.
$$
The following corollary is a simple consequence of Theorem \ref{thm-p005}.

\begin{corollary}
Let $z_0 \in {\mathbb D} \backslash \{ 0 \}$ and $\lambda \in {\mathbb C}$ be such that $|\lambda | \leq \alpha_{1k}/2$. Also let $\gamma_1 = 2 \lambda /\alpha_{1k}$.
\begin{enumerate}[{\rm (i)}]

\item If $|\gamma_1 |= 1 $ then $V_{k\mbox{-}\mathcal{UCV}}(z_0,\lambda)=\{w_0\}$, where
$w_0 = \int_0^{z_0} \zeta^{-1} \{ P_k(\gamma_1 \zeta)- 1 \} \, d \zeta$.\\

\item If $|\gamma_1 |<1 $ then
$V_{k\mbox{-}\mathcal{UCV}}(z_0,\lambda) = Q_{\gamma_1}(z_0, \overline{\mathbb D} )$,
where
$$
 Q_{\gamma_1}(z_0, \varepsilon ) = \int_0^{z_0} \zeta^{-1} \left\{ P_k \left( \zeta
 \frac{\varepsilon \zeta + \gamma_1} {1+ \overline{\gamma_1} \varepsilon \zeta} \right) -1 \right\} \, d \zeta
$$
is a convex, univalent and analytic function of $\varepsilon \in \overline{\mathbb D}$. Furthermore
$$
 \log f'(z_0)  =  Q_{\gamma_1}(z_0, \varepsilon )
$$
 for some $\varepsilon \in \partial {\mathbb D}$ and $f \in k\mbox{-}\mathcal{UCV}$ with
$a_2(f) =  \lambda $, if, and only if,
$$
   f(z)  = \int_0^z  e^{Q_{\gamma_1}(\zeta , \varepsilon )} \, d \zeta , \quad z \in {\mathbb D}.
$$
\end{enumerate}
\end{corollary}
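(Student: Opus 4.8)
The plan is to obtain the result as a direct specialization of Theorem \ref{thm-p005} to the convex domain $\Omega = \Omega_k$ with conformal map $P = P_k$, so the whole argument amounts to checking that the hypotheses of that theorem are met and then matching the case distinctions. First I would verify the hypotheses: by its definition $\Omega_k = \{w \in {\mathbb C} : \text{Re}\, w > k|w-1|\}$ is a convex domain containing $1$ (as recorded in the paragraph introducing $k\mbox{-}\mathcal{UCV}$), and $P_k$ is the conformal map of ${\mathbb D}$ onto $\Omega_k$ normalized so that $P_k(0)=1$ and $P_k(z) = 1 + \alpha_{k1}z + \alpha_{k2}z^2 + \cdots$. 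Inspecting the three explicit formulas for $\alpha_{k1}$ shows that $\alpha_{k1}$ is a positive real number in each of the ranges $0 \le k < 1$, $k=1$, and $1 < k < \infty$; hence $|\alpha_{k1}| = \alpha_{k1}$, and the constraint $|\lambda| \le |\alpha_1|/2$ of Theorem \ref{thm-p005} reads exactly as the stated hypothesis $|\lambda| \le \alpha_{k1}/2$.

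Next I would translate the dichotomy of Theorem \ref{thm-p005} into the conditions on $\gamma_1$ appearing in the corollary. Since $\gamma_1 = 2\lambda/\alpha_{k1}$ and $\alpha_{k1} > 0$, we have $|\gamma_1| = 2|\lambda|/\alpha_{k1}$, so that the boundary case $|\lambda| = \alpha_{k1}/2$ of Theorem \ref{thm-p005}(i) is equivalent to $|\gamma_1| = 1$ (part (i) of the corollary), while the interior case $|\lambda| < \alpha_{k1}/2$ of Theorem \ref{thm-p005}(ii) is equivalent to $|\gamma_1| < 1$ (part (ii) of the corollary).

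With these identifications in place, parts (i) and (ii) of the corollary are precisely the conclusions of parts (i) and (ii) of Theorem \ref{thm-p005} with $P$ replaced throughout by $P_k$. In particular, the single-point value $w_0 = \int_0^{z_0} \zeta^{-1}\{P_k(\gamma_1 \zeta) - 1\}\, d\zeta$, the extremal function $Q_{\gamma_1}(z_0,\varepsilon)$ together with its convexity and univalence as a function of $\varepsilon \in \overline{\mathbb D}$, and the characterization of the boundary extremal functions $f(z) = \int_0^z e^{Q_{\gamma_1}(\zeta,\varepsilon)}\, d\zeta$ all carry over verbatim. I therefore expect no genuine obstacle here: the argument is a straightforward instantiation, and the only point requiring care is confirming from the displayed formulas that $\alpha_{k1}$ is real and positive, which is what makes the constraint $|\lambda| \le \alpha_{k1}/2$ coincide with $|\gamma_1| \le 1$ and thereby aligns the two case splits.
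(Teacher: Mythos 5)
Your proposal is correct and matches the paper's own treatment: the paper offers no separate argument, stating only that the corollary is a simple consequence of Theorem \ref{thm-p005} applied with $\Omega=\Omega_k$ and $P=P_k$, which is exactly your instantiation. Your added check that $\alpha_{k1}$ is real and positive in all three ranges of $k$ (so that $|\lambda|\le\alpha_{k1}/2$ is equivalent to $|\gamma_1|\le 1$) is the one detail the paper leaves implicit, and you handle it correctly.
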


\subsection{Janowski Starlike and Convex Function}
For $A,B\in\mathbb{C}$ with $|B|\le 1$ and $A\ne B$, let $P_{A,B}(z):=(1+Az)/(1+Bz)$. Then $P_{A,B}$ is a conformal map of $\mathbb{D}$ onto a convex domain $\Omega_{A,B}$. In this case, the classes $\mathcal{S}^* ( \Omega_{A,B} )$ and $\mathcal{CV} (\Omega_{A,B})$ reduces to
$$
\mathcal{S}^*(A,B):=\left\{f \in {\mathcal A}_0({\mathbb D} ): \frac{z f'(z)}{f(z)} \prec \frac{1+A z}{1+B z} \right\}
$$
and
$$
\mathcal{CV}(A,B):=\left\{f \in {\mathcal A}_0({\mathbb D} ): \frac{z f''(z)}{f'(z)}+1 \prec \frac{1+A z}{1+B z} \right\}
$$
respectively. Since $P_{A,B}(\mathbb{D})=P_{-A,-B}(\mathbb{D})$, with out loss of generality we may assume that $A\in\mathbb{C}$ with $-1\le B\le 0$ and $A\ne B$. It is important to note that functions in $\mathcal{S}^*(A,B)$ with $A\in\mathbb{C}$, $-1\le B\le 0$ and $A\ne B$ are not  in general univalent. For $-1\le B<A\le 1$, it is easy to see that $\Omega_{A,B}\subset\mathbb{H}$, and so $\mathcal{S}^*(A,B)\subset\mathcal{S}^*$. Similar result holds for $\mathcal{CV}(A,B)$. (Note that for $-1\le B<A\le 1$, the class $\mathcal{S}^*(A,B)$ was first introduced and investigated by Janowski \cite{Janowski-1973}).

\bigskip
We also note that $P_{A,B}(z):=(1+Az)/(1+Bz)=1+(A-B)z+\cdots$. For $f\in\mathcal{CV}(A,B)$, from (\ref{eq:relation_between_c_and_b}) and (\ref{eq:relation_between_b_and_a}) we immediately obtain $|a_2(f)|\le |A-B|/2$. For $z_0 \in {\mathbb D} \backslash \{ 0 \}$ and $|\lambda|\le |A-B|/2$, consider the following
\begin{align*}
V_{\mathcal{CV}(A,B)}(z_0) &:= \{ \log f'(z_0 ) : f \in \mathcal{CV}(A,B) \},\\
V_{\mathcal{CV}(A,B)}(z_0,\lambda) &:=
\{ \log f'(z_0) : f \in \mathcal{CV}(A,B) \; \mbox{with} \; a_2(f) = \lambda \}.
\end{align*}
The following corrolary is a simple consequence of Theorems \ref{thm-p001} and \ref{thm-p005}.

\begin{corollary}\label{cor-p001}
Let $z_0 \in {\mathbb D} \backslash \{ 0 \}$ be fixed and $\lambda\in\mathbb{C}$ be such that $|\lambda|\le |A-B|/2$. Also let $\gamma_1 = 2 \lambda /(A-B)$.
\begin{enumerate}[{\rm (i)}]

\item The region of variability $V_{\mathcal{CV}(A,B)}(z_0)$
is a convex, closed, Jordan domain and coincides with the set $K( \overline{\mathbb D}(0,|z_0|))$, where
$$
K(z) = \int_0^z \frac{A-B}{1+B\zeta} \, d \zeta
$$
is a convex, univalent function in ${\mathbb D}$. Furthermore,
$\log f'(z_0) = K( \varepsilon z_0)$ for some $|\varepsilon | =1$ and $f \in \mathcal{CV}(A,B)$
if, and only if, $f(z) = \varepsilon^{-1}F(\varepsilon z)$, where $F(z)= \int_0^z e^{K(\zeta )} \, d \zeta $.\\

\item If $|\gamma_1 |= 1 $ then $V_{\mathcal{CV}(A,B)}(z_0,\lambda)=\{w_0\}$, where
$w_0 = \int_0^{z_0} \zeta^{-1} \{ P(\gamma_1 \zeta)- 1 \} \, d \zeta$.\\

\item If $|\gamma_1 |< 1 $ then
$V_{\mathcal{CV}(A,B)}(z_0,\lambda) = Q_{\gamma_1}(z_0, \overline{\mathbb D} )$,
where
$$
 Q_{\gamma_1}(z_0, \varepsilon ) = \int_0^{z_0}
 \frac{(A-B)\sigma_{\gamma_1}(\varepsilon \zeta)}{1+ B\zeta \sigma_{\gamma_1}(\varepsilon \zeta)} \, d \zeta
$$
is a convex, univalent and analytic function of $\varepsilon \in \overline{\mathbb D}$. Furthermore
$$
 \log f'(z_0)  =  Q_{\gamma_1}(z_0, \varepsilon )
$$
holds for some $\varepsilon \in \partial {\mathbb D}$ and $f \in \mathcal{CV}( \Omega )$ with
$a_2(f) =  \lambda $, if, and only if,
$$
   f(z)  = \int_0^z  e^{Q_{\gamma_1}(\zeta , \varepsilon )} \, d \zeta , \quad z \in {\mathbb D}.
$$
\end{enumerate}
\end{corollary}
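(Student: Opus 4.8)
The plan is to treat the corollary as a pure specialization of Theorems \ref{thm-p001} and \ref{thm-p005} to the Janowski data, so the entire task reduces to checking the hypotheses and simplifying the integrals. Since $P_{A,B}(z)=(1+Az)/(1+Bz)$ maps ${\mathbb D}$ conformally onto the convex domain $\Omega_{A,B}$ with $P_{A,B}(0)=1$, and its expansion reads $P_{A,B}(z)=1+(A-B)z+\cdots$, we have $\alpha_1=A-B\neq 0$ because $A\neq B$. In particular $\Omega_{A,B}$ is starlike with respect to $1$, so both theorems apply with $\Omega=\Omega_{A,B}$, $P=P_{A,B}$, and $\gamma_1=2\lambda/\alpha_1=2\lambda/(A-B)$. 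The single computation driving everything is the elementary identity
\[
\frac{P_{A,B}(w)-1}{w}=\frac{A-B}{1+Bw},\qquad w\in{\mathbb D},
\]
which follows at once from $P_{A,B}(w)-1=(A-B)w/(1+Bw)$.

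For part (i), Theorem \ref{thm-p001} gives that $V_{\mathcal{CV}(A,B)}(z_0)$ is a convex closed Jordan domain equal to $K(\overline{\mathbb D}(0,|z_0|))$, with $K(z)=\int_0^z \zeta^{-1}(P_{A,B}(\zeta)-1)\,d\zeta$. Substituting $w=\zeta$ in the identity above turns this into $K(z)=\int_0^z \frac{A-B}{1+B\zeta}\,d\zeta$, the claimed formula; convexity and univalence of $K$, together with the extremal characterization $f(z)=\varepsilon^{-1}F(\varepsilon z)$ with $F(z)=\int_0^z e^{K(\zeta)}\,d\zeta$, carry over verbatim from Theorem \ref{thm-p001}.

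For parts (ii) and (iii), I would invoke Theorem \ref{thm-p005}. When $|\gamma_1|=1$, equivalently $|\lambda|=|A-B|/2=|\alpha_1|/2$, part (i) of that theorem at once produces the single point $w_0=\int_0^{z_0}\zeta^{-1}\{P_{A,B}(\gamma_1\zeta)-1\}\,d\zeta$, which is (ii). When $|\gamma_1|<1$, part (ii) of Theorem \ref{thm-p005} yields $V_{\mathcal{CV}(A,B)}(z_0,\lambda)=Q_{\gamma_1}(z_0,\overline{\mathbb D})$, where, recognizing $\sigma_{\gamma_1}(\varepsilon\zeta)=(\varepsilon\zeta+\gamma_1)/(1+\overline{\gamma_1}\varepsilon\zeta)$,
\[
Q_{\gamma_1}(z_0,\varepsilon)=\int_0^{z_0}\zeta^{-1}\bigl\{P_{A,B}\bigl(\zeta\,\sigma_{\gamma_1}(\varepsilon\zeta)\bigr)-1\bigr\}\,d\zeta.
\]
Applying the identity with $w=\zeta\,\sigma_{\gamma_1}(\varepsilon\zeta)$ rewrites the integrand as $(A-B)\sigma_{\gamma_1}(\varepsilon\zeta)/(1+B\zeta\,\sigma_{\gamma_1}(\varepsilon\zeta))$, which is precisely the stated expression, and the convexity, univalence and extremal assertions again transfer directly.

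Since this is entirely a specialization, I do not expect any genuine obstacle; the only point demanding a little care is the simplification in part (iii), where one must feed the Möbius argument $\zeta\,\sigma_{\gamma_1}(\varepsilon\zeta)$ rather than $\zeta$ itself into the identity, and confirm that the factor $\zeta^{-1}$ cancels the $\zeta$ in the numerator $(A-B)\,\zeta\,\sigma_{\gamma_1}(\varepsilon\zeta)$, leaving $(A-B)\sigma_{\gamma_1}(\varepsilon\zeta)$.
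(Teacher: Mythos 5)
Your proposal is correct and follows exactly the route the paper intends: the paper states only that the corollary ``is a simple consequence of Theorems \ref{thm-p001} and \ref{thm-p005},'' and your specialization to $P_{A,B}$ together with the identity $P_{A,B}(w)-1=(A-B)w/(1+Bw)$ supplies precisely the missing computation. The hypothesis checks (convexity of $\Omega_{A,B}$, $\alpha_1=A-B\neq 0$) and the simplifications of $K$ and $Q_{\gamma_1}$ are all accurate.
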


\begin{remark}
The region of variability $V_{\mathcal{CV}(A,B)}(z_0,\lambda)$ for the class $\mathcal{CV}(A,B)$ was first obtained by Ul-Haq \cite{Ul-Haq-2014} for $-1\le B<0$ and $A>B$. Although, Ul-Haq considered the problem for $A\in\mathbb{C}$, $0<B\le 1$ and $A\ne B$, it is evident from the computation that it is valid only for $-1\le B<0$ and $A>B$. We  also note that the Herglotz representation (2) in \cite{Ul-Haq-2014} for functions in $\mathcal{CV}(A,B)$ is not valid when $-1< B<0$, and this was used by by Ul-Haq \cite{Ul-Haq-2014} in order to obtain the region of variability $V_{\mathcal{CV}(A,B)}(z_0)$.
\end{remark}

In particular, for $A= e^{-2i\alpha}$ with $\alpha\in(-\pi/2,\pi/2)$ and $B= -1$, the class $\mathcal{CV}(A,B)$ reduces to the class of functions which satisfy ${\rm Re\,} \{e^{i\alpha}(1+zf''(z)/f'(z))\}>0$ for $z\in\mathbb{D}$. This class, denoted by $\mathcal{S}_{\alpha}$  are known as Robertson functions. It is interesting to note that if we choose $A= e^{-2i\alpha}$ with $\alpha\in(-\pi/2,\pi/2)$ and $B= -1$ in Corollary \ref{cor-p001}. then we obtain the result obtained in \cite{Ponnusamy-Vasudevarao-Yanagihara-2008}.

\bigskip
For $A= 1-2\alpha$ with $-1/2\le\alpha<1$ and $B= -1$, the class $\mathcal{CV}(A,B)$ reduces to the class of functions $f$  satisfying  ${\rm Re\,} (1+zf''(z)/f'(z))>\alpha$ for $z\in\mathbb{D}$. This class, denoted by $\mathcal{CV}(\alpha)$  are known as convex functions of order $\alpha$. It is important to note that for $0\le\alpha<1$, $\mathcal{CV}(\alpha)\subset\mathcal{CV}$. On the other hand, for $-1/2\le\alpha<0$ functions in $\mathcal{CV}(\alpha)$ are convex functions in some direction (see \cite{Ponnusamy-Sahoo-Yanagihara-2014}). If we choose $A= 1-2\alpha$ with $-1/2\le\alpha<1$ and $B= -1$ in Corollary \ref{cor-p001} then we obtain the precise region of variability
$V_{\mathcal{CV}(\alpha)}(z_0):= \{ \log f'(z_0 ) : f \in \mathcal{CV}(\alpha) \}$ and
$V_{\mathcal{CV}(\alpha)}(z_0,\lambda):=\{ \log f'(z_0) : f \in \mathcal{CV}(\alpha) \; \mbox{and} \; a_2(f) = \lambda \}$,
which also gives a  generalization of Theorem \ref{theorem-A}. In particular, if we choose $A= 2$ and $B= -1$ in Corollary \ref{cor-p001} then we obtain the result obtained by Ponnusamy and Vasudevarao \cite[Theorem 2.6]{Ponnusamy-Vasudevarao-2007}. Similarly, for $A= -2$ and $B= -1$, the class $\mathcal{CV}(A,B)$ reduces to the class of functions $f$ which satisfy ${\rm Re\,} (1+zf''(z)/f'(z))<3/2$ for $z\in\mathbb{D}$. Note that functions in the class $\mathcal{CV}(-2,-1)$ are starlike, but not necessarily convex ( \cite{Ali-Vasudevarao-2015}), and if we choose $A= -2$ and $B= -1$ in Corollary \ref{cor-p001} then we obtain the result obtained in \cite[Theorem 2.8]{Ponnusamy-Vasudevarao-2007}.

\bigskip
Since $f \in \mathcal{CV}(A,B)$ if, and only if, $zf'(z) \in \mathcal{S}^*(A,B)$, we can easily translate the above results about variability regions of
$\log \{ f(z_0)/z_0 \}$ when $f$ ranges over $\mathcal{S}^*(A,B)$ with or without the condition $f''(0) = 2\lambda$.

\section{Concluding Remark}
Theorem \ref{thm:of_order_zero} demonstrates that our results are closely related to the concept of subordination. Our assumption $g \in {\mathcal F}_\Omega (c)$ in Theorem \ref{thm:Main_theorem} can be rewritten as $g \prec P$ when $c_0=0$. In this case $P^{-1} (g (z)) = c_1z+\cdots +c_n z^n + \cdots $.
However, apart from a few exceptional cases, we cannot express our conclusions
in terms of subordination relations. Let $c  = (c_0, \ldots , c_{n-1}) = (0,\ldots , 0)\in {\mathbb C}^n$.
Then the Schur parameter for $c$ is given by $\gamma = (\gamma_0, \ldots , \gamma_{n-1}) = (0, \ldots, 0)$.
For this particular choice of $c$, the function $Q_{\gamma , j}$ defined by (\ref{def:extremal_Q}) becomes
$$
 Q_{\gamma , j}(z, \varepsilon) =
 \int_0^z \zeta^j \{ P( \varepsilon \zeta^n ) -1 \}\, d \zeta .
$$
Let
$$
   H(z) =
 \frac{j+1}{z^{(j+1)/n}} \int_0^{z^{1/n}} \zeta^j \{ P( \zeta^n ) -1 \}\, d \zeta .
$$
Then
$$
 \frac{j+1}{z^{j+1}} Q_{\gamma , j}(z, \varepsilon ) = H( \varepsilon z^n ) .
$$
Since by Theorem \ref{thm:Main_theorem}, for each fixed $z \in {\mathbb D} \backslash \{ 0 \}$,
$Q_{\gamma , j}(z, \varepsilon )$ is a convex univalent function of $\varepsilon \in \overline{\mathbb D}$,
and $H( \varepsilon z^n )$ is also a convex univalent function of $\varepsilon \in \overline{\mathbb D}$.
By letting $z \rightarrow 1$ in ${\mathbb D}$, $H (\varepsilon )$ is also convex univalent in ${\mathbb D}$.
Let $g \in {\mathcal F}_\Omega$ with $g'(0)= \cdots = g^{(n-1)}(0) = 0$.
It follows from Theorem \ref{thm:Main_theorem} that for any $z \in {\mathbb D} \backslash \{ 0 \}$,
there exists $\varepsilon \in \overline{\mathbb D}$ satisfying
$$
 \int_0^z \zeta^j \{g(\zeta) -1 \} \, d \zeta = Q_{\gamma , j}(z, \varepsilon ) .
$$
Thus for all $z \in {\mathbb D}$, we have
$$
\frac{j+1}{z^{j+1}} \int_0^z \zeta^j \{g(\zeta) -1 \} \, d \zeta
= \frac{j+1}{z^{j+1}} Q_{\gamma , j}(z, \varepsilon )
= H( \varepsilon z^n ) \subset H( {\mathbb D }) .
$$
Consequently, in view of univalence of $H$ we obtain the following subordination relation
$$
  \frac{j+1}{z^{j+1}} \int_0^z \zeta^j \{g(\zeta) -1 \} \, d \zeta \prec H(z) .
$$
This was previously proved by Hallenbeck and Ruscheweyh \cite{Hallenbeck-Ruscheweyh-1975}. In fact, Hallenbeck and Ruscheweyh \cite{Hallenbeck-Ruscheweyh-1975} proved the above subordination relation when $\text{\rm Re} \, j \geq  -1$ with $j \not= -1$.

\vspace{1 cm}

\noindent{\bf Data availability statement:} There are no data associated with this article.

\vspace{0.5 cm}


\begin{thebibliography}{99}

\bibitem{Ali-Vasudevarao-2015}
{\sc Md Firoz Ali} and {\sc A. Vasudevarao}, Coefficient inequalities and Yamashita's conjecture for some classes of analytic functions,
{\it J. Aust. Math. Soc.} {\bf 100} (2016), 1--20.


\bibitem{Ali-Vasudevarao-Yanagihara-2018}
{\sc Md Firoz Ali, Vasudevarao Allu} and {\sc Hiroshi Yanagihara}, An application of Schur algorithm to variability regions of certain analytic functions,
(18 pages)	arXiv:1905.10241 


\bibitem{Bakonyi-Constantinescu-1992}
{\sc M. Bakonyi} and {\sc T. Constantinescu}, \emph{Schurs's algorithm and several applications}, Pitman Research Notes in Math. {\bf 261},
Longman Scientific \& Technical, Harlow, Essex, 1992.


\bibitem{Duren-book}
{\sc P. Duren}, \emph{Univalent Functions}, Grundlehren Math. Wiss. \textbf{259}, Springer-Verlag, New York 1983.


\bibitem{Finkelstein-1967}
{\sc M. Finkelstein}, Growth estimates of convex function, \emph{Proc. Amer. Math. Soc.} \textbf{18}(1967), 412--418.


\bibitem{Gronwall-1920}
{\sc T. H. Gronwall}, On the distortion in conformal mapping when the second coefficient in the mapping function has an assigned value,
\emph{Proc. Nat. Acad. Sci. USA}, {\bf 6}(1920), 300--302.


\bibitem{Foias-Brazho-book}
{\sc C. Foias} and  {\sc A. E. Frazho}, \emph{The commutant lifting approach to interpolation problems},
Operator Theory: Advances and Applications, \textbf{44} Birkh\"{a}user Verlag, Basel, 1990.


\bibitem{Hallenbeck-Ruscheweyh-1975}
{\sc D. J. Hallenbeck} and {\sc S. Ruscheweyh}, Subordination by convex functions, \emph{Proc. Amer. Math. Soc.} \textbf{52}(1975), 191--195.


\bibitem{Janowski-1973}
{\sc W. Janowski}, Some extremal problems for certain families of analytic functions, {\it Ann. Polon. Math.}, {\bf 28} (1973), 297--326.



\bibitem{Kanas-Wisniowska-1999}
{\sc S. Kanas} and {\sc A. Wi\'{s}niowska}, Conic regions and $k$-uniform convexity, \emph{J. Comput. Appl. Math.} \textbf{105}(1999), 327--336.












\bibitem{Ponnusamy-Sahoo-Yanagihara-2014}
{\sc S. Ponnusamy, S. K. Sahoo}, {\sc H. Yanagihara}, Radius of convexity of partial sums of functions in the close-to-convex family,
\emph{Nonlinear Anal.} \textbf{95} (2014), 219--228.


\bibitem{Ponnusamy-Vasudevarao-2007}
{\sc S. Ponnusamy}, and {\sc A. Vasudevarao}, Region of variability of two subclasses of univalent functions,
\emph{J. Math. Anal. Appl.} \textbf{332} (2007), 1323--1334.


\bibitem{Ponnusamy-Vasudevarao-Yanagihara-2008}
{\sc S. Ponnusamy, A. Vasudevarao}, and {\sc H. Yanagihara}, Region of variability of univalent functions $f(z)$ for which $zf'(z)$ is spirallike,
\emph{Houston J. Math.} \textbf{34}(2008), 1037--1048.




\bibitem{Ronning-1993}
{\sc F. Ronning}, A survey in uniformly convex and uniformly starlike functions,
\emph{Ann. Univ. Mariae Curie-Sklodowska Sect. A} \textbf{47}(1993), 123--134.


\bibitem{Schur-1917}
{\sc I. Schur}, \"{U}ber Potenzreihen, die im Innern des Einheitskreises Beschrankt Sind, \emph{J. Reine Angew. Math.} \textbf{147} (1917), 205--232.


\bibitem{Schur-1986}
{\sc I. Schur}, On power series which are bounded in the interior of the unit circle I, Translated from the German.
\emph{Oper. Theory Adv. Appl.}, 18, I. Schur methods in operator theory and signal processing, 31--59, Birkh\"{a}user, Basel, 1986.


\bibitem{Suffridge-1970}
{\sc T. J. Suffridge}, Some remarks on convex maps of the unit disk, \emph{Duke Math. J.}, \textbf{37}(1970), 755--777.


\bibitem{Ul-Haq-2014}
{\sc W. Ul-Haq}, Variability regions for Janowski convex functions, \emph{Complex Var. Elliptic Equ.} \textbf{59}(2014), 355--361.


\bibitem{Yanagihara:bounded}
{\sc H. Yanagihara}, Regions of variability for functions of bounded derivatives, \emph{Kodai Math. J.}  \textbf{28}(2005), 452--462.


\bibitem{Yanagihra:convex}
{\sc H. Yanagihara}, Regions of variability for convex functions, \emph{Math. Nachr.} \textbf{279}(2006), 1723--1730.


\bibitem{Yanagihra:unif_convex}
{\sc H. Yanagihara}, Variability Regions for families of convex functions, \emph{Comput. Methods Funct. Theory}, \textbf{10}(2010), 291--302.

\end{thebibliography}
\end{document}